\documentclass[10pt]{amsart}
\usepackage{bm}

\usepackage{amsmath}
\usepackage{amsthm}
\usepackage{amssymb}

\newcommand{\R}{\mathbb{R}}
\newcommand{\C}{\mathbb{C}}
\newcommand{\Z}{\mathbb{Z}}
\newcommand{\N}{\mathbb{N}}
\newcommand{\lr}[1]{\langle #1 \rangle}
\newcommand{\eps}{\varepsilon}
\newcommand{\norm}[1]{\| #1 \|}

\newcommand{\wh}[1]{\widehat{#1}}
\newcommand{\ol}[1]{\overline{#1}}

\newcommand{\dx}{\partial_x}

\newcommand{\M}{M^0_{2,1}}

\newtheorem{thm}{Theorem}[section]

\newtheorem{dfn}[thm]{Definition}
\newtheorem{lem}[thm]{Lemma}

\theoremstyle{remark}

\allowdisplaybreaks[1]

\usepackage{graphicx}
\usepackage{color}


\title[Sharp ill-posedness of DKG in 1d]{Sharp ill-posedness of the Dirac-Klein-Gordon system in one dimension}

\author[S. Machihara]{Shuji Machihara}
\address{Department of Mathematics, Faculty of Science, Saitama University, 255 Shimo-Okubo, Sakura-ku, Saitama City 338-8570, Japan}
\email{machihara@rimath.saitama-u.ac.jp}

\author[M. Okamoto]{Mamoru Okamoto}
\address{Division of Mathematics and Physics, Faculty of Engineering, Shinshu University, 4-17-1 Wakasato, Nagano City 380-8553, Japan}
\email{m\_okamoto@shinshu-u.ac.jp}

\subjclass[2010]{35Q41, 35B30, 35R25}

\keywords{Dirac-Klein-Gordon system, ill-posedness}

\date{\today}

\allowdisplaybreaks[1]

\numberwithin{equation}{section}

\begin{document}

\begin{abstract}
We show the ill-posedness of the Cauchy problem for the Dirac-Klein-Gordon 
system in one dimension in the critical Sobolev space. From this, 
we finish the classification of the regularities for which this problem 
is well-posed or ill-posed. 
\end{abstract}

\maketitle

\section{Introduction}

We consider the Cauchy problem for the Dirac-Klein-Gordon system:
\begin{equation}\label{DKG}
\left\{
\begin{aligned}
& (i\gamma_0\partial_{t}+\gamma_1\dx)\psi+m\psi=\phi \psi, \\
& (\partial_t ^2-\dx ^2 + M^2) \phi = \psi ^{\ast} \gamma ^0 \psi, \\
& \psi (0,x) = \psi _0 (x) , \  \phi (0,x) = \phi _0(x), 
\ \partial _t \phi (0,x) = \phi _1(x),
\end{aligned}
\right.
\end{equation}
where $\psi = \bigl( \begin{smallmatrix} \psi _1 \\ \psi _2 \end{smallmatrix} \bigr): 
\R ^{1+1} \rightarrow \C ^2$ and $\phi : \R ^{1+1} \rightarrow \R$ are unknown functions 
of $(t,x)\in \R ^{1+1}$, 
$\psi _0 = \bigl( \begin{smallmatrix} \psi _{0,1} \\ \psi _{0,2} \end{smallmatrix} \bigr)
: \R \rightarrow \C ^2$ and $\phi _0 , \phi _1:\R \rightarrow \R$ are given functions
of $x\in\R$.
Here,
$m$ and $M$ are nonnegative constants, and $\gamma_0, \gamma_1$ are $2\times2$ Hermitian 
matrices 
\begin{equation}\label{D-matrix}
\gamma_0 = \begin{pmatrix} 1 & 0 \\ 0 & -1 \end{pmatrix}, \quad
\gamma _1 = \begin{pmatrix} 0 & -i \\ i & 0 \end{pmatrix}
\end{equation}
which satisfy the anticommutation relations and leads to 
$(i\gamma_0\partial_{t}+\gamma_1\dx)^2=(-\partial_t ^2+\dx ^2)I_2$
where $I_2$ is the $2\times2$ identity matrix, 
$\psi^{\ast}$ denotes the conjugate transpose of $\psi$. 

We will discuss the well-posedness for the Cauchy problem in \eqref{DKG}. 
A problem is called well-posed if a solution exists uniquely and the solution map is continuous. 
The last property is important in our main theorem in this paper. 
If the solution map is continuous, 
the sequence of initial data $u_n(0)\to u(0)$ requires the convergence of 
the corresponding sequence of solutions 
$u_n(t)\to u(t)$ with $t>0$. 
Here, we are concerned with initial data in the Sobolev spaces $H^{s} (\R)$.  
For $s\in\R$, the Sobolev norm associated with 
regularity $s$ is given by
\begin{align*}
\|f\|_{H^s}=\|\lr{\xi}^s\hat{f}\|_{L^2}=\left(\int_{\R}\lr{\xi}^{2s}|\hat{f}(\xi)|^2d\xi\right)^{1/2}
\end{align*}
where $\hat{f}(\xi)=\int_{\R}e^{-ix\xi}f(x)dx$ is the Fourier transform
of $f(x)$. 
We consider the well-posedness in $H^s (\R)\times H^r(\R)$ which means 
\begin{align*}
(\psi,\phi)\in H^s(\R)\times H^r(\R)
\end{align*}
where we use the symbols $s$ and $r$ for the regularities of $\psi$ and $\phi$ 
respectively. 
For brevity, we shall refer to well-posedness from 
initial data $(\psi_0,\phi_0,\phi_1)\in H^s(\R)\times H^r(\R)\times H^{r-1}(\R)$ 
to the solution  $(\psi,\phi,\partial_t\phi)\in H^s(\R)\times H^r(\R)\times H^{r-1}(\R)$
as well-posedness in $H^s(\R)\times H^r(\R)$.

\subsection{Known results}
Since we are interested in the classification of 
well-posedness in this paper, 
we restrict ourselves to consider the time local issue, namely  
time local well-posedness or not, and 
we will not go into the time global issue. 
If any of the conditions which stipulate well-posedness, 
namely existence, uniqueness or continuous dependence 
on the initial data fails, 
we say that the problem is ill-posed. 
The first author with Nakanishi and Tsugawa in \cite{MNT10} proved 
time local well-posedness of \eqref{DKG} 
in the region $|s|\le r\le s+1$ except the following forbidden point 
\begin{align}\label{criticalpoint}
(s,r)=\left(-\frac12,\frac12\right).
\end{align}
We call this point {\it the critical point} in this paper.  
In the same paper \cite{MNT10}, they proved ill-posedness in the two 
regions max$\{0,r\}<s$ 
and max$\{\frac12,s+1\}<r$. The authors of the current paper proved 
ill-posedness in the region $s<0,r<\frac12, s+r<0$ in \cite{MacOka16}, 
and on the two lines $s=0, r<0$ and $s<-\frac12,r=\frac12$ 
in \cite{MacOka162}. 
Thus, well-posedness or ill-posedness of the problem remained open 
only at the critical point \eqref{criticalpoint} 
(see Figure \ref{graph1}). 

Other earlier papers which obtained 
the well-posedness in subsets of the region $|s|\le r\le s+1$ are 
\cite{bournaveas1, b-g1,b-g2,chadam,CG, fang0, fang1, FangHuang, 
machihara3, pecher, s-t1, s-t2}. There are also global well-posedness 
results with $s<0$ in which they used Bourgain's frequency decomposition 
technique or I-method with a help of the charge conservation law 
\cite{candy2, selberg1, T}. 

\begin{figure}[h]
\includegraphics[width=7cm]{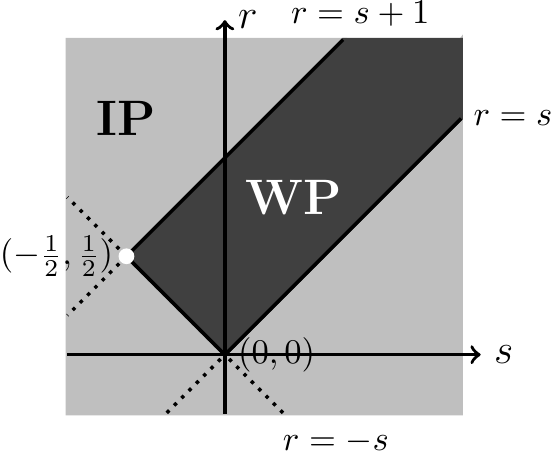}
\caption{Well-posedness and ill-posedness regions}
\label{graph1}
\end{figure}

\subsection{Previous attempts at the critcal point} 
In this subsection, we introduce two partial results 
connected with well-posedness or ill-posedness  
at the critical point \eqref{criticalpoint}. 
In \cite{MNT10}, they proved that the solution map of the Dirac-Klein-Gordon system is 
not twice differentiable at the origin of $H^{-\frac{1}{2}}(\R) \times H^{\frac{1}{2}}(\R)$. 
We remark that the notion of well-posedness does not require twice differentiability for 
the solution map,
and so, this falls slightly short of concluding ill-posedness of the problem.  

Shiota in \cite{S} proved that for any 
initial data $(\psi_0, \phi_0, \phi_1)\in L^{1}(\R)\times L^{\infty}(\R)\times L^1(\R)$, 
there exists a unique solution  $(\psi, \phi)\in L^{1}(\R)\times L^{\infty}(\R)$, and 
the solution map in this setting is continuous. 
Even though Shiota has not published his result, 
there is an introduction to Shiota's result and proof in 
the authors' previous paper \cite{MacOka162}. 
Here we remark the pair of spaces $H^{-\frac12}(\R)$ and $L^1(\R)$, 
and also the pair of $H^{\frac12}(\R)$ and $L^\infty(\R)$ share the 
same scaling property respectively, that is, 
for $f_{\lambda}(x)=f(\lambda x)$, 
\begin{align*}
\frac{\|f_{\lambda}\|_{\dot{H}^{-\frac12}}}{\|f_{\lambda}\|_{L^1}}
=\frac{\|f\|_{\dot{H}^{-\frac12}}}{\|f\|_{L^1}}, \quad
\frac{\|f_{\lambda}\|_{\dot{H}^{\frac12}}}{\|f_{\lambda}\|_{L^{\infty}}}
=\frac{\|f\|_{\dot{H}^{\frac12}}}{\|f\|_{L^{\infty}}}, \qquad
\lambda>0.
\end{align*}
Since there is no inclusion relation between both pairs 
$H^{-\frac12}(\R)$ and $L^1(\R)$, 
and $H^{\frac12}(\R)$ and $L^\infty(\R)$, 
this does not lead to the well-posedness of the problem 
at the critical point.  

\subsection{Main theorem in this paper} 

In this paper, we finish the problem.
We show ill-posedness at the critical point \eqref{criticalpoint} for the problem \eqref{DKG}.
We prove that the solution map of the problem is discontinuous everywhere 
in $H^{-\frac{1}{2}}(\R) \times H^{\frac{1}{2}}(\R) \times H^{-\frac{1}{2}}(\R)$.
Hence this gives a complete classification of 
the range of Sobolev regularity for well-posedness of \eqref{DKG}. Those results 
between the ill-posedness in Sobolev spaces and the well-posedness by 
Shiota \cite{S} in Lebesgue spaces reminds us the similar situations for 
the Chern-Simons-Dirac equation in 1d which is ill-posedness by the current 
authors \cite{MacOka16} in Sobolev spaces and well-posedness by the current first 
author and Ogawa \cite{MacOga17} in Lebesgue spaces. Now we state our main theorem.

\begin{thm} \label{IP}
For any $\psi_0 \in H^{-\frac{1}{2}} (\R)$, $(\phi_0,\phi_1) \in H^{\frac{1}{2}}(\R) \times H^{-\frac{1}{2}}(\R)$, and any $\eps >0$, there exists a solution $(\psi_{\eps},\phi_{\eps})$ to \eqref{DKG} and $t_{\eps} \in (0, \eps )$ such that
\begin{align*}
& \| \psi_{\eps}(0) - \psi_0 \|_{H^{-\frac{1}{2}}} + \| \phi_{\eps} (0) - \phi_0 \|_{H^{\frac{1}{2}}} + \| \partial_t \phi_{\eps}(0) - \phi_1 \|_{H^{-\frac{1}{2}}} < \eps , \\
& \| \phi_{\eps}(t_{\eps}) \|_{H^{\frac{1}{2}}}> \eps^{-1}.
\end{align*}
\end{thm}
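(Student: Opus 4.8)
The plan is to exploit the fact that the only obstruction to well-posedness at the critical point comes from the quadratic interaction in the wave equation, namely the term $\psi^\ast\gamma_0\psi$ feeding into $\phi$. Since we have the freedom to perturb the data by an arbitrarily small amount in $H^{-1/2}\times H^{1/2}\times H^{-1/2}$, it suffices to add to the given data a high-frequency, small-amplitude perturbation of $\psi_0$ that is negligible in $H^{-1/2}$ but whose self-interaction produces, after a short time $t_\eps$, a contribution to $\phi$ that is enormous in $H^{1/2}$. Concretely, I would take $\psi_0^{(N)}$ with Fourier support in two well-separated dyadic blocks near frequencies $\pm N$ (or near $N$ and $N+1$ after an appropriate localization), normalized so that $\|\psi_0^{(N)}\|_{H^{-1/2}} \to 0$ as $N\to\infty$; the product $\psi^\ast\gamma_0\psi$ then has a low-frequency component (the resonant difference-frequency part) whose size in $H^{1/2}$ — once integrated against the wave propagator over a time $t_\eps\sim N^{-1}$ — is large. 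The combination $(\psi_0+\psi_0^{(N)},\phi_0,\phi_1)$ is within $\eps$ of the prescribed data for $N$ large, while $\|\phi_\eps(t_\eps)\|_{H^{1/2}}$ blows up.

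The key steps, in order, are as follows. First, I would fix the perturbation profile: choose $a,b$ Schwartz with compact frequency support, set $\psi_0^{(N)}(x) = c_N\bigl(\begin{smallmatrix} e^{iNx}a(x)\\ e^{-iNx}b(x)\end{smallmatrix}\bigr)$ (or a variant matched to the structure of $\gamma_0$ so the resonant term survives), and pick the amplitude $c_N$ so that $\|\psi_0^{(N)}\|_{H^{-1/2}}\sim c_N N^{-1/2}\|a\|_{L^2}\to 0$. Second, I would write the Duhamel formula for $\phi_\eps$: $\phi_\eps(t) = (\text{free evolution of }\phi_0,\phi_1) + \int_0^t \frac{\sin((t-t')\langle\nabla\rangle_M)}{\langle\nabla\rangle_M}(\psi_\eps^\ast\gamma_0\psi_\eps)(t')\,dt'$, and replace $\psi_\eps$ in the integrand by the free Dirac evolution of $\psi_0+\psi_0^{(N)}$ at leading order, controlling the error by a fixed-point/continuity argument on a short time interval (here one uses the local theory, or a direct iteration, on $[0,t_\eps]$ with $t_\eps = \eps/2$ say, or $t_\eps$ shrinking with $N$). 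Third — the heart of the matter — I would isolate from $\psi_\eps^\ast\gamma_0\psi_\eps$ the cross term between the two frequency blocks of $\psi_0^{(N)}$: this term oscillates at a frequency that is $O(1)$ (the difference $N-N=0$, modulated by $a\bar b$), so it is low-frequency, and its coefficient scales like $c_N^2$. Integrating the wave propagator against a low-frequency source of amplitude $\sim c_N^2$ over time $t_\eps$ gives a contribution of size $\sim c_N^2 t_\eps$ in, say, $L^2$, hence also in $H^{1/2}$ since the output is low-frequency. Fourth, I would optimize: with $c_N\sim N^{1/2}\delta_N$ where $\delta_N\to 0$ slowly, $\|\psi_0^{(N)}\|_{H^{-1/2}}\sim\delta_N\to 0$, while the nonlinear contribution to $\phi_\eps(t_\eps)$ has size $\sim c_N^2 t_\eps \sim N\delta_N^2 t_\eps$; choosing $t_\eps$ fixed (e.g. $\eps/2$) and $\delta_N = N^{-1/4}$ makes this $\sim N^{1/2}\to\infty$, beating $\eps^{-1}$ for $N$ large. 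Finally, I would add $\|\phi_0\|_{H^{1/2}}$ and the free-evolution term, which are bounded, so they do not interfere.

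The main obstacle I anticipate is Step 3 combined with the error control in Step 2: I must show that replacing the true $\psi_\eps$ by the free Dirac evolution in the quadratic source introduces an error in $\phi_\eps(t_\eps)$ that is genuinely lower order than the main term $\sim N^{1/2}$ — this requires a quantitative short-time iteration (e.g.\ in a Strichartz or $X^{s,b}$ space adapted to $[0,t_\eps]$, or even just $L^\infty_t L^2_x \cap L^\infty_{t,x}$ estimates using the explicit 1d Dirac propagator) with constants tracked in $N$. The delicate point is that the nonlinearity is quadratic and the solution $\phi_\eps$ itself feeds back into the Dirac equation via $\phi_\eps\psi_\eps$, so one must check that this feedback does not destroy $\psi_\eps$ before time $t_\eps$; since the $\psi$-equation has the good (null) structure that made it well-posed for $|s|\le r\le s+1$, and we are only asking for control on a short interval, this should be manageable, but it is where the real work lies. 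A secondary point is to verify that the resonant low-frequency term in $\psi^\ast\gamma_0\psi$ does not accidentally cancel because of the matrix $\gamma_0 = \mathrm{diag}(1,-1)$ — this is why the two blocks of $\psi_0^{(N)}$ must be placed and phased so that $\bar\psi_1\psi_1 - \bar\psi_2\psi_2$ retains a non-vanishing low-frequency part; a concrete choice such as putting all the mass in $\psi_1$ (two sub-blocks near $N$ and $-N$ within the first component) sidesteps the cancellation entirely.
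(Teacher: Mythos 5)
There is a genuine gap, and it is right at the heart of the argument (your Step 3). You place both frequency blocks of the perturbation at magnitude $\sim N$ so that the product lands at low spatial frequency with amplitude $\sim c_N^2$, and you then estimate the Duhamel contribution to $\phi$ by $c_N^2\,t_\eps$. But this ignores the \emph{temporal} oscillation that the free Dirac flow puts on the source. If the two blocks sit near $\pm N$ (or near $N$ and $N+1$), the cross term in $\psi^\ast\gamma_0\psi$ after free evolution carries a phase $e^{\pm 2iNt'}$; in the variables $u,v$ of \eqref{DKG'} this is visible in \eqref{wtphi1}--\eqref{wtphi2} as the factor $\bigl(e^{2it\eta}-1\bigr)/\eta$ with $\eta$ of size $N$. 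Consequently the time integral over $[0,t_\eps]$ in the Duhamel formula gains only $O(1/N)$, not $O(t_\eps)$, and the honest bound is $\|\phi^{(2)}(t_\eps)\|_{H^{1/2}}\lesssim c_N^2/N$. Since smallness of the data forces $c_N\lesssim N^{1/2}$, this is $O(1)$: no inflation. This is exactly the obstruction the paper points out when it states that the abstract high $\times$ high $\to$ low scheme (Bejenaru--Tao, Kishimoto--Tsugawa, and the authors' own argument for the region $s<0$, $r<\tfrac12$, $s+r<0$) cannot reach the critical point $(s,r)=(-\tfrac12,\tfrac12)$. At the endpoint $s+r=0$ the polynomial gain disappears.

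The paper's mechanism is different in a way that matters. One component ($\hat u_0$) is placed at high frequency $[5N,7N]$, but the other ($\hat v_0$) is spread over $[-N,0]$, i.e.\ down to frequency zero, with an $(\log N)^{-1/2}$ amplitude correction so that its $H^{-1/2}$ norm still equals $\sigma_N$. The output is then a high $\times$ (low-to-mid) $\to$ high interaction landing near $[5N,8N]$, and the crucial near-resonance occurs at $\eta\approx 0$ inside the convolution, producing the \emph{logarithmic} divergence $\int_0^{tN}\bigl|(e^{2i\eta}-1)/\eta\bigr|\,d\eta\sim\log(tN)$. That logarithm is the entire gain; it is then amplified by choosing $t=N^{-1}(\log N)^{(\log N)^{1/2}}\to 0$ so that $\log(tN)=(\log N)^{1/2}\log\log N$, giving $\|\phi^{(2)}(t)\|_{H^{1/2}}\gtrsim\sigma_N^2\log\log N$. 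No fixed-$t_\eps$, power-of-$N$ argument can reproduce this. Relatedly, your Step 2 (error control for fixed $t_\eps=\eps/2$) is not viable: local well-posedness is exactly what fails at the critical regularity, and the paper's bound on the higher iterates requires $t^3N^2\to 0$, which forces $t_\eps\to 0$ with $N$; they run the induction in the modulation space $M^0_{2,1}$ precisely to make the iterate estimates uniform and summable under that constraint. In short, the framework (power series of Picard iterates, quadratic bilinear interaction) matches the paper, but the choice of interaction and the claimed size of the second iterate are wrong, and repairing them requires the logarithmic resonance and the $t\to 0$ scaling that the paper uses.
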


As mentioned in \cite{MNT10} and \cite{MacOka162}, 
inconvenient interaction for well-posedness occurs in the 
nonlinearity $\psi ^{\ast} \gamma ^0 \psi$ of the Klein-Gordon equation, 
and, therefore, we expect that the behavior of $\phi$ gives rise
to the ill-posedness. 
The full details of the proof of Theorem \ref{IP} are given in the 
subsequent section, prior to that, we provide an overview	of the 
main ideas. 
We set some sequence of initial data with 
parameter $N$ which we will take the limit $N\to\infty$ later. 
This sequence of initial data converges to 0. 
We apply an iteration argument, and for that,  
we write $\phi$ by the series
\[
\phi = \sum_{k=1}^{\infty} \phi^{(k)},
\]
where $\phi^{(k)}$ is the $k$-th iteration term defined by \eqref{phik} below.
The second iteration term is estimated from below with respect to $N$. 
We show the series converges to the solution of \eqref{DKG} by taking 
the existence time sufficiently small. We show the boundedness 
of $\phi-\phi^{(2)}$ with respect to $N$. 
More precisely, we estimate $\phi^{(2)}$ from below by making use of 
the quadratic interaction of 
linear solutions of the Dirac equation.
We estimate the quadratic interaction, especially of high frequency of 
those solutions, so this is high $\times$ high $\to$ high type 
failure of a bilinear estimate. 
This is different from the abstract 
argument by Bejenaru-Tao \cite{BT}, as similar arguments used by 
Kishimoto-Tsugawa \cite{KT} or the authors' proof for 
the area $s<0$, $r<\frac12, s+r<0$ in \cite{MacOka16} 
where they treated 
high $\times$ high $\to$ low type failure. 
We can't apply such abstract theory to obtain the ill-posedness 
at the critical point \eqref{criticalpoint}. 
Our proof is quite straightforward in some sense, that is, 
an induction argument. We follow the argument by Iwabuchi-Ogawa \cite{IO} (see also \cite{Kis}). 
We estimate each of the iteration terms, and for then, 
we make certain delicate estimates for the second iteration 
term and fortunately, thanks to the smoothing effect of the Duhamel 
terms, it is enough to roughly estimate the higher order iteration 
terms.

\section{Proof of Theorem \ref{IP}}
We will prove Theorem \ref{IP} under the massless case 
$m=M=0$ and norm inflation at zero 
$\psi_0=\phi_0=\phi_1=0$. This is sufficient for the general case in 
Theorem \ref{IP} from the argument in \cite{Oka17}.

\subsection{Preliminaries and iteration setting}
Here we introduce the modulation space $\M (\R)$, see \cite{F}.

\begin{dfn}
Define the space $M^0_{2,1} (\R)$ as the completion of $C_0^{\infty}(\R)$ with respect to the norm
\[
\norm{f}_{\M} = \sum _{k \in \mathbb{Z}} \norm{\wh{f}}_{L^2([k-1, k+1])} .
\]
\end{dfn}

The modulation space $\M (\R)$ satisfies the embeddings
\[
H^{\frac{1}{2}+\eps} (\R) \hookrightarrow \M (\R) \hookrightarrow L^2(\R).
\]
Moreover, $\M (\R)$ is a Banach algebra, in particular, there exists $C_1>1$ such that
\begin{equation} \label{prod}
\| fg \|_{\M} \le C_1 \| f \|_{\M} \| g \|_{\M}
\end{equation}
holds true for any $f,g \in \M (\R)$.

By setting
\[
u := \psi_1-\psi_2, \quad v := \psi_1+\psi_2,
\]
the system \eqref{DKG} with $m=M=0$ is written as follows:
\begin{equation}\label{DKG'}
\begin{cases}
(\partial_{t}+\dx)u= -i \phi v, \quad u(0)=u_{0}, \\
(\partial_{t}-\dx)v= -i \phi u, \quad v(0)=v_{0}, \\
(\partial_{t}^{2}-\dx^{2})\phi = 2 \Re (u\bar{v}), 
\quad \phi(0)=\phi_{0}, \ \partial_{t}\phi(0)=\phi_{1},
\end{cases}
\end{equation}
where $\Re z$ is the real part of $z\in\C$. 
For $N \in{\mathbb{N}}$, we set
\begin{align}\label{initialD1}
& \hat{u}_0(\xi)=\sigma_N\mathbf{1}_{[5N,7N]}(\xi), \\
& \hat{v}_0(\xi)=\sigma_N(\log N)^{-\frac12} \mathbf{1}_{[-N,0]}(\xi), \label{initialD2} \\
& \phi_{0}(x)=\phi_{1}(x)=0,
\end{align}
where $\sigma_{N}$ converges to zero but slower than 
$(\log\log N)^{-\frac12}$ as $N\to\infty$.
Elementary calculations yield
\begin{align}\label{u0}
\|u_{0}\|_{H^{-\frac{1}{2}}}
&=\sigma_N\Big(\int_{5N}^{7N}\frac{1}{\lr{\xi}}d\xi\Big)^{\frac12}
\sim\sigma_N, \\
\|v_{0}\|_{H^{-\frac{1}{2}}}
&=\sigma_N(\log N)^{-\frac12}\Big(\int_{-N}^{0}\frac{1}{\lr{\xi}}d\xi\Big)^{\frac12}
\sim\sigma_N, \label{v0} \\
\|\phi_{0}\|_{H^{\frac{1}{2}}}&=\|\phi_{1}\|_{H^{-\frac{1}{2}}}=0. \label{phi0}
\end{align}
Let us define the first iteration
\[
u^{(1)} (t,x) =u_{0}(x-t), \quad v^{(1)} (t,x) =v_{0}(x+t), \quad \phi^{(1)} (t,x)=0.
\]
For $k \in \Z_{\ge 2}$, we define the higher order iteration functions 
as follows
\begin{align}
u^{(k)} (t,x) &= -i \sum_{\substack{k_1,k_2 \in \N \\ k_1+k_2=k}} \int_0^t (\phi^{(k_{1})}v^{(k_{2})}) \left( t', x- (t-t') \right) dt', \label{uk} \\
v^{(k)} (t,x) &= -i \sum_{\substack{k_1,k_2 \in \N \\ k_1+k_2=k}} \int_0^t (\phi^{(k_{1})}u^{(k_{2})}) \left( t', x+(t-t') \right) dt', \label{vk} \\
\phi^{(k)} (t,x) &= 2 \sum_{\substack{k_1,k_2 \in \N \\ k_1+k_2=k}} \int_0^t \frac{\sin (t-t') |\dx|}{|\dx|} \Re \left( u^{(k_{1})} \ol{v^{(k_{2})}} \right) (t',x) dt'. \label{phik}
\end{align}
We remark here that $\phi^{(1)}=0$ gives
\begin{align*}
u^{(2l)}&=v^{(2l)}=\phi^{(2l+1)}=0
\end{align*}
for any $l \in \mathbb{N}$. We can reduce the number of functions 
which we need to estimate, but it seems that  
this does not help the main part of our argument. It still remains 
to estimate all of the crucial iteration functions.

\subsection{Convergence of iteration terms}
We shall first show that the following expansions converge in  $L^{\infty}([0,T]; \M (\R))$ 
for sufficiently small $T>0$ with fixed $N$,
\begin{align}\label{expand}
u:= \sum_{k=1}^{\infty} u^{(k)}, \quad
v:= \sum_{k=1}^{\infty} v^{(k)}, \quad
\phi := \sum_{k=1}^{\infty} \phi^{(k)},
\end{align}
and, moreover, these limits satisfy \eqref{DKG'}.
We begin by establishing some precise estimates for $u^{(1)}, v^{(1)}$ 
and $\phi^{(2)}$ in details. 
We have a constant $C_2>1$ such that, for any $t>0$,
\begin{align} \label{u1v1}
\begin{split}
\|u^{(1)}(t)\|_{\M} &=\sum_{k\in\Z}\|e^{it\xi}\hat{u}_{0}\|_{L^{2}[k-1,k+1]}
=\sum_{k=5N}^{7N}\|\hat{u}_{0}\|_{L^{2}[k-1,k+1]} \\
&\le C_2\sigma_N N,
\end{split}
\end{align}
and also
\begin{align*}
\begin{split}
\|v^{(1)}(t)\|_{\M} &=\sum_{k\in\Z}\|e^{-it\xi}\hat{v}_{0}\|_{L^{2}[k-1,k+1]}
=\sum_{k=0}^{N}\|\hat{v}_0\|_{L^{2}[k-1,k+1]} \\
&\le C_2\sigma_N(\log N)^{-\frac12}N.
\end{split}
\end{align*}
Since these estimates are uniform with respect to $t$, we also have
\begin{align}\label{u1}
\|u^{(1)}\|_{L^{\infty}_t\M} &\le C_2\sigma_N N, \\
\|v^{(1)}\|_{L^{\infty}_t\M} &\le C_2\sigma_N(\log N)^{-\frac12}N.\label{v1}
\end{align}
We next estimate $\phi^{(2)}$. 
A direct calculation shows
\begin{equation} \label{wtphi}
\begin{aligned}
\wh{\phi^{(2)}}(t,\xi)
= 2\int_0^t \frac{\sin (t-t')\xi}{\xi} \mathcal{F}\left[\Re(u_0(x-t') \ol{v_0(x+t')}) \right] dt'. 
\end{aligned}
\end{equation}
We calculate the Fourier transform. From
 $\wh{\ol{f}}(\xi)=\ol{\wh{f}}(-\xi)$, we have
\begin{align*}
\mathcal{F}\left[u_0(x-t') \ol{v_0(x+t')}\right]&=(\wh{u_0}(\xi)e^{-it'\xi})*(\wh{\ol{v_0}}(\xi)e^{it'\xi}) 
=(\wh{u_0}(\xi)e^{-it'\xi})*(\ol{\wh{v_0}}(-\xi)e^{it'\xi}) \\
&=\int_{\R} \wh{u_0}(\xi-\eta)e^{-it'(\xi-\eta)}\ol{\wh{v_0}}(-\eta)e^{it'\eta}d\eta,
\end{align*}
and so 
\begin{align*}
&2\mathcal{F}\left[\Re(u_0(x-t') \ol{v_0(x+t')}) \right]
=\mathcal{F}\left[u_0(x-t') \ol{v_0(x+t')}+\ol{u_0(x-t')}v_0(x+t')\right] \\
&=\int_{\R} \wh{u_0}(\xi-\eta)e^{-it'(\xi-\eta)}\ol{\wh{v_0}}(-\eta)e^{it'\eta}d\eta
+\ol{\int_{\R} \wh{u_0}(-\xi-\eta)e^{-it'(-\xi-\eta)}\ol{\wh{v_0}}(-\eta)e^{it'\eta}d\eta} \\
&=\int_{\R} \wh{u_0}(\xi-\eta)e^{-it'(\xi-\eta)}\ol{\wh{v_0}}(-\eta)e^{it'\eta}d\eta
+\int_{\R} \ol{\wh{u_0}}(-\xi-\eta)e^{-it'(\xi+\eta)}\wh{v_0}(-\eta)e^{-it'\eta}d\eta. 
\end{align*}
Substituting into \eqref{wtphi}, we get two terms, the first of which is 
\begin{align}\label{wtphi1}
\begin{split}
&\int_0^t \frac{\sin (t-t')\xi}{\xi} 
\int_{\R} \wh{u_0}(\xi-\eta)e^{-it'(\xi-\eta)}\ol{\wh{v_0}}(-\eta)e^{it'\eta}
d\eta dt' \\
&=\int_{\R} \wh{u_0}(\xi-\eta)\ol{\wh{v_0}}(-\eta)
\int_0^t\frac{e^{it\xi}e^{2it'(-\xi+\eta)}-e^{-it\xi}e^{2it'\eta}}{2i\xi}
dt'd\eta \\
&=\frac{1}{4\xi}\int_{\R} \left(e^{it\xi}\frac{e^{2it(\eta-\xi)}-1}{\xi-\eta}+e^{-it\xi}\frac{e^{2it\eta}-1}{\eta}\right)
\wh{u_0}(\xi-\eta)\ol{\wh{v_0}}(-\eta)d\eta,
\end{split}
\end{align}
and the second of which is 
\begin{align}\label{wtphi2}
\begin{split}
&\int_0^t \frac{\sin (t-t')\xi}{\xi} 
\int_{\R} \ol{\wh{u_0}}(-\xi-\eta)e^{-it'(\xi+\eta)}{\wh{v_0}}(-\eta)e^{-it'\eta}
d\eta dt' \\
&=\frac{1}{4\xi}\int_{\R} \left(e^{it\xi}\frac{e^{-2it(\xi+\eta)}-1}{\xi+\eta}-e^{-it\xi}\frac{e^{-2it\eta}-1}{\eta}\right)
\ol{\wh{u_0}}(-\xi-\eta){\wh{v_0}}(-\eta)d\eta.
\end{split}
\end{align}
From here we consider these terms with the sequence of initial data \eqref{initialD1} 
and \eqref{initialD2} for $\hat{u}_0$ and $\hat{v}_0$ respectively. 
Since these are real-valued functions, 
the complex conjugate disappear. 
The first term on the right-hand side of \eqref{wtphi1} is estimated as follows:
\begin{equation} \label{phi2-1}
\left|\frac{1}{\xi}\int_{\R} e^{it\xi} \frac{e^{2it(\eta-\xi)}-1}{\xi-\eta} \wh{u_0}(\xi-\eta) \wh{v_0}(-\eta) d\eta \right|
\le\sigma_N^2(\log N)^{-\frac{1}{2}}\frac{\bm{1}_{[5N,8N]}(\xi)}{\xi}.
\end{equation}
For the second term in \eqref{wtphi1}, 
we change the variable $\eta$ by $\eta/t$ and estimate
\begin{align}\label{phi2-2}
\begin{split}
&\left|\frac{1}{\xi}\int_{\R} e^{-it\xi} \frac{e^{2it\eta}-1}{\eta} \wh{u_0}(\xi-\eta) \wh{v_0}(-\eta) d\eta \right| \\
&\le \sigma_N^2  (\log N)^{-\frac{1}{2}}\int_0^{tN} \left| \frac{e^{2i\eta}-1}{\eta} \right| d\eta \ 
\frac{\bm{1}_{[5N,8N]}(\xi)}{\xi} \\
&\le\sigma_N^2  (\log N)^{-\frac{1}{2}}( 2 + \log tN) \ 
\frac{\bm{1}_{[5N,8N]}(\xi)}{\xi}
\end{split}
\end{align}
for $tN \ge 1$. 
The estimate for \eqref{wtphi2} is similar but the support of the corresponding characteristic 
function is in $[-8N, -5N]$. 
In total, we obtain
\[
|\wh{\phi^{(2)}}(t,\xi)|
\le 2\sigma_N^2(\log tN) (\log N)^{-\frac{1}{2}}\frac{\bm{1}_{[5N,8N]}(\xi)+\bm{1}_{[-8N,-5N]}(\xi)}{\xi},
\]
provided that $tN \gg 1$. 
Therefore, from
\begin{align*}
\left\|\mathcal{F}^{-1}\left(\frac{\bm{1}_{[5N,8N]}(\xi)+\bm{1}_{[-8N,-5N]}(\xi)}{\xi}\right)\right\|_{\M}
\sim\frac{8N-5N}{N}\sim 1,
\end{align*}
we have
\begin{align}\label{phi2}
\|\phi^{(2)}\|_{L^{\infty}_t\M} &\le C_{3}\sigma_N^2(\log tN)(\log N)^{-\frac12}.
\end{align}
We now set the time $t=t(N)$ as follows
\[
t= N^{-1} (\log N)^{(\log N)^{\frac{1}{2}}}.
\]
We enumerate properties of the terms 
for large $N$ which we will use below: 
As $N\to\infty$, then $t\to0$ and 
\begin{align}
tN&= (\log N)^{(\log N)^{\frac{1}{2}}}\to\infty, \\
\log (tN)&=(\log N)^{\frac{1}{2}}\log\log N\to\infty, \\
t^{3}N^{2}&=N^{-1}(\log N)^{3(\log N)^{\frac{1}{2}}}\to0, \label{N-oder3}\\
t^{3}N^{\frac52}&=N^{-\frac12}(\log N)^{3(\log N)^{\frac{1}{2}}}\to0. \label{N-oder4}
\end{align}

To estimate the higher order iteration terms, we use the following lemma (\cite{Kis}, see also Lemma 4.2 in \cite{MacOka16}):
\begin{lem} \label{lem:Kis}
Let $\{ a_n \}$ be a positive sequence.
Assume
\begin{equation} \label{seq}
a_n \le C \sum _{\substack{n_1, n_2 \in \mathbb{N}, \\ n_1+n_2=n}} a_{n_1} a_{n_2}
\end{equation}
holds.
Then, we have
\[
a_n \le \left( \frac{2}{3} \pi ^2 C \right)^{n-1} a_1^n.
\]
\end{lem}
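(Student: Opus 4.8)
The plan is to prove the sharper quantitative estimate
\[
a_n \le \frac{M^{n-1}a_1^n}{n^2}, \qquad M := \tfrac{2}{3}\pi^2 C,
\]
for every $n\in\mathbb{N}$, by strong induction on $n$; the assertion of the lemma then follows at once since $n^{-2}\le 1$. For $n=1$ the index set $\{(n_1,n_2)\in\mathbb{N}^2: n_1+n_2=1\}$ is empty, so hypothesis \eqref{seq} carries information only for $n\ge 2$ and is used only there; for $n=1$ the claimed bound is the trivial equality $a_1\le a_1$, which serves as the base case.

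For the inductive step, fix $n\ge 2$ and assume the bound for all smaller indices. Inserting the induction hypothesis into \eqref{seq} gives
\[
a_n \le C\sum_{\substack{n_1,n_2\in\mathbb{N}\\ n_1+n_2=n}}\frac{M^{n_1-1}a_1^{n_1}}{n_1^2}\cdot\frac{M^{n_2-1}a_1^{n_2}}{n_2^2} = CM^{n-2}a_1^n\sum_{\substack{n_1,n_2\in\mathbb{N}\\ n_1+n_2=n}}\frac{1}{n_1^2 n_2^2},
\]
so the whole matter reduces to the convolution bound $\sum_{n_1+n_2=n}\frac{1}{n_1^2 n_2^2}\le \frac{2\pi^2}{3n^2}$, which is the one genuinely quantitative input. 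This I would obtain from the partial-fraction identity $\frac{1}{n_1 n_2}=\frac{1}{n}\bigl(\frac{1}{n_1}+\frac{1}{n_2}\bigr)$, valid because $n_1+n_2=n$: squaring it and using $(x+y)^2\le 2(x^2+y^2)$ yields $\frac{1}{n_1^2 n_2^2}=\frac{1}{n^2}\bigl(\frac{1}{n_1}+\frac{1}{n_2}\bigr)^2\le \frac{2}{n^2}\bigl(\frac{1}{n_1^2}+\frac{1}{n_2^2}\bigr)$, and summing over $n_1=1,\dots,n-1$ together with $\sum_{k\ge 1}k^{-2}=\pi^2/6$ gives the constant $4\cdot\frac{\pi^2}{6}=\frac{2\pi^2}{3}$.

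Combining the two displays and recalling $M=\frac{2}{3}\pi^2 C$, we get $a_n\le CM^{n-2}a_1^n\cdot\frac{2\pi^2}{3n^2}=\frac{M^{n-1}a_1^n}{n^2}$, which closes the induction; since $n^{-2}\le 1$ this yields $a_n\le(\frac{2}{3}\pi^2 C)^{n-1}a_1^n$. The only real obstacle is arranging a clean enough bound for the convolution sum $\sum\frac{1}{n_1^2 n_2^2}$ with a constant that the $1/n^2$-weighted induction can absorb; the rest is bookkeeping, and---unlike the Catalan-number approach to such recursions, which would give the slightly better constant $4C$---no generating-function input is needed.
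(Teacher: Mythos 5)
Your proof is correct. The paper does not actually reprove this lemma---it is stated with citations to Kishimoto \cite{Kis} and to Lemma 4.2 of \cite{MacOka16}---but the constant $\tfrac{2}{3}\pi^2 C = 4\zeta(2)\,C$ is a clear fingerprint of exactly the argument you give: prove the sharper $n^{-2}$-weighted bound $a_n\le M^{n-1}a_1^n/n^2$ by strong induction, and close the induction with the convolution estimate $\sum_{n_1+n_2=n} n_1^{-2}n_2^{-2}\le \tfrac{2}{n^2}\sum(n_1^{-2}+n_2^{-2})\le \tfrac{2\pi^2}{3}n^{-2}$, obtained from $\tfrac{1}{n_1n_2}=\tfrac{1}{n}(\tfrac{1}{n_1}+\tfrac{1}{n_2})$ and $(x+y)^2\le 2(x^2+y^2)$. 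Every step checks out: the base case is vacuous for the hypothesis and trivial for the bound; for $n\ge 2$ both $n_1,n_2<n$ so the strong induction hypothesis applies; and $C M^{n-2}\cdot\tfrac{2\pi^2}{3}=M^{n-1}$ closes the loop. Your side remark about the Catalan-number route is also correct (writing $a_n\le C^{n-1}a_1^n c_n$ with $c_n=\sum c_{n_1}c_{n_2}$ gives $c_n=C_{n-1}\le 4^{n-1}$, hence the constant $4C<\tfrac{2}{3}\pi^2 C$), though since the lemma only claims the weaker constant, nothing is lost by the cleaner $\zeta(2)$ argument. In short: this is the expected proof, carried out correctly and self-contained.
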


\begin{lem} \label{lem:iteration}
There exists a constant $C>1$ such that for any $l \in \N$ and $t \gg N^{-1}$ the following estimates hold:
\begin{align*}
\| \phi^{(2l)}\|_{L^{\infty}_t\M} 
&\le\sigma_{N}^{2l} (\log tN) (\log N)^{-\frac{1}{2}} 
\left(Ct^3N^2 \right)^{l-1}, \\
\| u^{(2l+1)}\|_{L^{\infty}_t\M} 
&\le\sigma_{N}^{2l+1} (\log tN) (\log N)^{-1} tN 
\left(Ct^3N^2 \right)^{l-1}, \\
\| v^{(2l+1)}\|_{L^{\infty}_t\M} 
&\le\sigma_{N}^{2l+1}(\log tN) (\log N)^{-\frac{1}{2}} tN
\left(Ct^3N^2 \right)^{l-1}.
\end{align*}
\end{lem}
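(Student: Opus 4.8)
The plan is to prove the three bounds simultaneously by strong induction on $l$, reading off coupled recursive inequalities from the iteration formulas \eqref{uk}, \eqref{vk}, \eqref{phik}. Since the $\M$-norm is invariant under the translations $f\mapsto f(\cdot\pm s)$ and under complex conjugation, and since $\M(\R)$ is a Banach algebra \eqref{prod}, the transport Duhamel terms satisfy, for $l\ge1$,
\[
\|u^{(2l+1)}(t)\|_{\M}\le C_1\sum_{l_1+l_2=l,\ l_1\ge1}\int_0^t\|\phi^{(2l_1)}(t')\|_{\M}\,\|v^{(2l_2+1)}(t')\|_{\M}\,dt',
\]
and similarly for $v^{(2l+1)}$ with the roles of $u$ and $v$ interchanged (here $\phi^{(1)}=0$ has already killed half of the summands), while the wave Duhamel term satisfies
\[
\|\phi^{(2l)}(t)\|_{\M}\le 2\sum_{l_1+l_2=l-1}\int_0^t\Bigl\|\frac{\sin((t-t')|\dx|)}{|\dx|}\,\Re\bigl(u^{(2l_1+1)}\ol{v^{(2l_2+1)}}\bigr)(t')\Bigr\|_{\M}\,dt'.
\]
I will in fact establish \emph{pointwise-in-$t$} versions of the three estimates (with $tN$, $\log(tN)$ replaced by $sN$, $\log(sN)$); since the right-hand sides are nondecreasing in $s$, taking the supremum over $s\in[0,t]$ recovers the $L^\infty_t\M$ statements. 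The base case $l=1$ is treated directly: $\phi^{(2)}$ is the pointwise form of \eqref{phi2}, and inserting \eqref{u1}, \eqref{v1} and \eqref{phi2} into \eqref{uk}, \eqref{vk} (only the term $(k_1,k_2)=(2,1)$ survives), followed by $\int_0^t(\log(t'N))\,dt'\lesssim t\log(tN)$, gives the bounds for $u^{(3)}$ and $v^{(3)}$.

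For the inductive step at level $l$ one first bounds $\phi^{(2l)}$, which involves only $u,v$ at levels $<l$, and then $u^{(2l+1)}$ and $v^{(2l+1)}$, which in addition use the freshly obtained bound for $\phi^{(2l)}$. The $u,v$ recursions are the easy ones: the transport propagator acts isometrically on $\M$, so one merely inserts the inductive bounds and carries out the $t'$-integral — keeping the exact powers of $t'$, since each inductive bound is, up to logarithms, a constant times $(t')^{3l_i-2}$ for $u^{(2l_i+1)},v^{(2l_i+1)}$ and a constant times $(t')^{3l_i-3}$ for $\phi^{(2l_i)}$, so the Duhamel integral contributes a factor of order $l^{-1}$, which exactly compensates the fact that the sum $\sum_{l_1+l_2=\cdot}$ has of order $l$ terms. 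The delicate estimate is the one for $\phi^{(2l)}$: here one must not lose the smoothing of the wave propagator. The multiplier $\sin(s\xi)/\xi$ is bounded by $\min(s,|\xi|^{-1})$, so on frequencies $|\xi|\gtrsim N$ it gives a gain of $N^{-1}$ — rather than the useless $s\le t$, which with $tN\to\infty$ would be far too large. Establishing that $\Re(u^{(2l_1+1)}\ol{v^{(2l_2+1)}})$ is essentially frequency-supported in $\{|\xi|\gtrsim N\}$ requires tracking the Fourier supports through the whole iteration; where the iteration leaks frequency towards the origin one splits off the low-frequency piece and estimates it by an oscillatory argument in the spirit of the computation leading to \eqref{phi2}, the resulting loss being absorbed by \eqref{N-oder3}--\eqref{N-oder4} (which leave room for a loss of order $N^{1/2}$). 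With the $N^{-1}$ gain and the exact $t'$-powers in hand, the growth factor upgrades to $(Ct^3N^2)^{l-1}$ with the claimed powers of $\sigma_N$, $\log N$, and $\log(tN)$; the $\log N$-exponents simply record the minimal number of factors $v^{(1)}$ that a given iteration term is forced to contain, each such factor carrying a $(\log N)^{-1/2}$ from \eqref{v1}.

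I expect the main obstacle to be precisely this $\phi^{(2l)}$ estimate, for two intertwined reasons. First, extracting the $N^{-1}$ smoothing rather than the trivial $O(t)$ bound forces a careful bookkeeping of which frequency interactions actually occur, plus an ad hoc treatment of the low-frequency leakage. Second, one must close the induction with a \emph{single} constant $C$ valid for all $l$: the linear-in-$l$ growth of the number of iteration terms is neutralised by the factorial-type gain of the nested Duhamel integrals (equivalently, by invoking Lemma \ref{lem:Kis} for a suitably renormalised scalar sequence), but a handful of small values of $l$ still have to be verified by hand, so that the fixed constants $C_1$ and $C_2$ do not swamp the base of the induction. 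Once these points are settled, the summation of the geometric series in the next subsection is routine, since $\sigma_N^2Ct^3N^2\to0$.
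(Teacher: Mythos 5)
Your main misstep is in the inductive estimate for $\phi^{(2l)}$ with $l\ge 2$, where you insist on the $|\xi|^{-1}$ smoothing of the wave propagator and dismiss the crude bound $|\sin((t-t')\xi)/\xi|\le t-t'$ as "useless" because $tN\to\infty$. In fact the paper uses exactly this crude bound: integrating it in $t'$ gives a factor $t^2$, and combined only with the algebra property \eqref{prod} this yields
\[
\|\phi^{(2l)}\|_{L^{\infty}_{t}\M}\le C_{1}t^{2}\sum_{k_1+k_2=2l}\|u^{(k_{1})}\|_{L^{\infty}_{t}\M}\|v^{(k_{2})}\|_{L^{\infty}_{t}\M},
\]
with no reference to Fourier supports whatsoever. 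This is enough because each extra quadratic interaction costs exactly a factor $t^3N^2$: for instance the term $u^{(2l+1)}\ol{v^{(1)}}$ contributes $t^2$ from the kernel, $N$ from $\|v^{(1)}\|_{\M}$, and $tN\,(t^3N^2)^{l-1}$ from the inductive bound on $u^{(2l+1)}$, which multiply to give $(t^3N^2)^{l}$ as required — and $t^3N^2\to0$ by \eqref{N-oder3}. Your intuition that the factor $t^2N^2=(tN)^2\to\infty$ is ruinous applies only to the base case $\phi^{(2)}$, which is indeed where the delicate oscillatory computation \eqref{phi2-1}--\eqref{phi2-2} replaces that $(tN)^2$ by the far smaller $\log(tN)$. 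That single $\log(tN)$ is then carried along passively through the whole induction and is never squared in the dominant contributions. Consequently the Fourier-support bookkeeping, the "low-frequency leakage" splitting, and the appeal to $\min(s,|\xi|^{-1})$ in your plan are unnecessary; as you yourself anticipate, they would be painful to carry out, and in fact they are entirely avoidable.

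Two secondary points. First, the combinatorial control of the $O(l)$ summands is handled in the paper not by retaining exact $t'$-powers to harvest a factorial gain, but by bounding every iterate in $L^{\infty}_t\M$, letting the constants pile up into a scalar sequence $a_k$ defined by $a_1=\max(C_2,C_3)$ and $a_k=2C_1\sum_{k_1+k_2=k}a_{k_1}a_{k_2}$, and invoking Lemma~\ref{lem:Kis} to conclude $a_k\le(\tfrac23\pi^2\cdot2C_1)^{k-1}a_1^{k}$ uniformly in $k$. This is the central mechanism, not a side remark, and it removes the need to "verify a handful of small $l$ by hand." Second, your pointwise-in-$s$ reformulation has a gap at small times: $\log(sN)<0$ for $s<N^{-1}$, so the claimed pointwise bounds are vacuous there; you would need to replace $\log(sN)$ by $\max(1,\log(sN))$ or give a separate short-time estimate, whereas working with $L^\infty_t$ norms and the standing hypothesis $t\gg N^{-1}$, as the paper does, sidesteps the issue.
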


\begin{proof}
Let $\{ a_k \}$ be the sequence defined by
\[
a_1= \max (C_2,C_3) , \quad a_k := 2C_1 \sum_{\substack{k_1,k_2 \in \N \\ k_1+k_2=k}} a_{k_1} a_{k_2}
\]
for $k \ge 2$, where $C_1, C_{2}$ and $C_3$ are the constants appearing in \eqref{prod}, \eqref{u1v1} and \eqref{phi2} respectively.
From Lemma \ref{lem:Kis}, it suffices to show that
\begin{align}
\| \phi^{(2l)}\|_{L^{\infty}_t\M} 
&\le a_{2l}\sigma_{N}^{2l} (\log tN) (\log N)^{-\frac{1}{2}} \left( t^3N^2 \right)^{l-1}, \label{phi2l} \\
\| u^{(2l+1)}\|_{L^{\infty}_t\M} 
&\le a_{2l+1}\sigma_{N}^{2l+1}(\log tN) (\log N)^{-1} tN \left( t^3N^2 \right)^{l-1}, \label{u2l+1} \\
\| v^{(2l+1)}\|_{L^{\infty}_t\M} 
&\le a_{2l+1}\sigma_{N}^{2l+1}(\log tN) (\log N)^{-\frac{1}{2}} tN \left( t^3N^2 \right)^{l-1}. \label{v2l+1}
\end{align}
We use an induction argument to obtain these estimates.
We have done the estimate for $\phi^{(2)}$ in \eqref{phi2} 
which is \eqref{phi2l} with $l=1$.
By \eqref{phik}, we have 
\begin{align*}
\|\phi^{(2l)}(t)\|_{\M}&\le 2 \sum_{\substack{k_1,k_2 \in \N \\ k_1+k_2=2l}} 
\int_0^t(t-t')\|u^{(k_{1})} \ol{v^{(k_{2})}} (t')\|_{\M} dt' \\
&\le\sum_{\substack{k_1,k_2 \in \N \\ k_1+k_2=2l}}C_{1}t^{2}
\|u^{(k_{1})}\|_{L^{\infty}_{t}\M}\|{v^{(k_{2})}}\|_{L^{\infty}_{t}\M}.
\end{align*}
Therefore
\begin{align}\label{induct-phi}
\|\phi^{(2l)}\|_{L^{\infty}_{t}\M}
&\le C_{1}t^{2}\sum_{\substack{k_1,k_2 \in \N \\ k_1+k_2=2l}}
\|u^{(k_{1})}\|_{L^{\infty}_{t}\M}\|{v^{(k_{2})}}\|_{L^{\infty}_{t}\M}.
\end{align}
Similarly, by \eqref{uk} and \eqref{vk}, we have
\begin{align}\label{induct-u}
\|u^{(2l+1)}\|_{L^{\infty}_{t}\M}
&\le C_{1}t\sum_{\substack{k_1,k_2 \in \N \\ k_1+k_2=2l+1}}
\|\phi^{(k_{1})}\|_{L^{\infty}_{t}\M}\|{v^{(k_{2})}}\|_{L^{\infty}_{t}\M}, \\
\|v^{(2l+1)}\|_{L^{\infty}_{t}\M}
&\le C_{1}t\sum_{\substack{k_1,k_2 \in \N \\ k_1+k_2=2l+1}}
\|\phi^{(k_{1})}\|_{L^{\infty}_{t}\M}\|{v^{(k_{2})}}\|_{L^{\infty}_{t}\M}.\label{induct-v}
\end{align}
We apply an induction argument with \eqref{u1}, \eqref{v1}, 
\eqref{phi2l}, \eqref{u2l+1} and \eqref{v2l+1}. 
So we treat with $u^{(1)}, v^{(1)}$ and others differently. 
Suppose that the estimates \eqref{phi2l}--\eqref{v2l+1} hold up to some $l \in \N$.
Then, from \eqref{induct-phi}, we have for large $N$, 
\begin{align*}
&\|\phi^{(2l+2)}\|_{L^{\infty}\M} \\
&\le C_{1}t^{2}\Bigg\{\|u^{(2l+1)}\|_{L^{\infty}_{t}\M}\|{v^{(1)}}\|_{L^{\infty}\M}
+\|u^{(1)}\|_{L^{\infty}_{t}\M}\|v^{(2l+1)}\|_{L^{\infty}_{t}\M} \\
& \hspace*{80pt} + \sum_{\substack{l_1,l_2 \in \N \\ l_1+l_2=l}}
\|u^{(2l_1+1)}\|_{L^{\infty}_{t}\M}\|v^{(2l_2+1)}\|_{L^{\infty}\M}\Bigg\}\\
&\le C_{1}a_{2l+1}a_1\sigma_{N}^{2l+2}(\log tN)(\log N)^{-\frac{3}{2}}(t^3N^2)^l \\
&\qquad+ C_{1}a_{1}a_{2l+1}\sigma_{N}^{2l+2}(\log tN)(\log N)^{-\frac{1}{2}}(t^3N^2)^l \\
& \hspace*{30pt} +  C_{1}\sum_{\substack{l_1, l_2 \in \N \\ l_1+l_2=l}}
a_{2l_1+1}a_{2l_2+1}\sigma_{N}^{2l+2} (\log tN)^2 (\log N)^{-\frac{3}{2}} (tN)^{-2} (t^3N^2)^l \\
&\le a_{2l+2}\sigma_{N}^{2l+2}(\log tN)(\log N)^{-\frac{1}{2}}(t^3N^2)^l.
\end{align*}
Similarly, for large $N$, we use \eqref{induct-u}
\begin{align*}
&\| u^{(2l+3)}\|_{L^{\infty}_{t}\M} \\
&\le C_{1}t \Bigg\{ \| \phi^{(2l+2)}\|_{L^{\infty}_{t}\M}
\|v^{(1)}\|_{L^{\infty}_{t}\M}
+\sum_{\substack{l_1,l_2 \in \N \\ l_1+l_2=l+1}} 
\|\phi^{(2l_1)}\|_{L^{\infty}_{t}\M}\|v^{(2l_2+1)}\|_{\M} \Bigg\}\\
&\le C_1 a_{2l+2}a_1\sigma_{N}^{2l+3}(\log tN)(\log N)^{-1}tN(t^3N^2)^l \\
& \hspace*{30pt} + C_1 \sum_{\substack{l_1, l_2 \in \N \\ l_1+l_2=l+1}} a_{2l_1} a_{2l_2+1}\sigma_{N}^{2l+3}(\log tN)^2 (\log N)^{-1} (tN)^{-1} (t^3N^2)^l \\
&\le a_{2l+3}\sigma_{N}^{2l+3}
(\log tN) (\log N)^{-1} tN (t^3N^2)^l,
\end{align*}
and use \eqref{induct-v} to have
\begin{align*}
&\| v^{(2l+3)}\|_{L^{\infty}_{t}\M} \\
&\le C_{1}t \Bigg\{ \| \phi^{(2l+2)}\|_{L^{\infty}_{t}\M}
\|u^{(1)}\|_{L^{\infty}_{t}\M} 
+ \sum_{\substack{l_1,l_2 \in \N \\ l_1+l_2=l+1}} 
\| \phi^{(2l_1)}\|_{L^{\infty}_{t}\M}
\|u^{(2l_2+1)}\|_{L^{\infty}_{t}\M} \Bigg\} \\
&\le C_1 a_{2l+2} a_1 \sigma_{N}^{2l+3}
(\log tN) (\log N)^{-\frac{1}{2}} tN (t^3N^2)^l \\
& \hspace*{30pt} + C_1 \sum_{\substack{l_1, l_2 \in \N \\ l_1+l_2=l+1}} a_{2l_1} a_{2l_2+1}\sigma_{N}^{2l+3}
(\log tN)^2 (\log N)^{-\frac{3}{2}} (tN)^{-1} (t^3N^2)^l \\
&\le a_{2l+3}\sigma_{N}^{2l+3}
(\log tN) (\log N)^{-\frac{1}{2}} tN (t^3N^2)^l.
\end{align*}
Therefore, the estimates \eqref{phi2l}--\eqref{v2l+1} hold true.
\end{proof}

This lemma says that the series \eqref{expand} converges in 
$L^{\infty}([0,T]; \M (\R))$ provided that $T^3N^2<1$ which is 
satisfied for large $N$ from \eqref{N-oder3}.
Moreover, since we have a condition on the support of the 
iteration functions, we estimate the Sobolev norm with respect to 
the $x$ variable as follows
\begin{align*}
\begin{split}
\|\phi^{(2l)}(t)\|_{H^{\frac{1}{2}}}
&=\| \mathcal{F}^{-1} [\bm{1}_{[-100^l N, 100^l N]}] \ast \phi^{(2l)}(t) \|_{H^{\frac{1}{2}}} \\
&\lesssim 100^l N^{\frac{1}{2}}  \| \phi^{(2l)}(t) \|_{\M} \\
&\lesssim\sigma_{N}^{2l}(\log tN)(\log N)^{-1/2}N^{\frac12}
(100 C t^{3}N^{2})^{l-1}.
\end{split}
\end{align*}
Then the same limit $\phi = \sum_{k=1}^{\infty} \phi^{(k)}$ in the 
modulation space $\M$ as above
also exists in $L^{\infty}([0,T];H^{\frac{1}{2}}(\R))$ 
provided that $100 C T^3N^2<1$.
Moreover if we take 
$100 C T^{3}N^{\frac52}<1$ we will have an extra $N^{-\frac{l-1}2}$ factor  
and estimate for each $l\ge2$  
\begin{align*}
\|\phi^{(2l)}(t)\|_{H^{\frac{1}{2}}}
&\lesssim\sigma_{N}^{2l}(\log tN)(\log N)^{-1/2}N^{\frac12}N^{-\frac{l-1}2}
(100 C t^{3}N^{\frac52})^{l-1} \\
&\lesssim\sigma_{N}^{2l}(\log tN)(\log N)^{-1/2}
(100 C t^{3}N^{\frac52})^{l-1}
\end{align*}
where $l=2$ was the worst case but it holds, 
and other cases $l\ge3$ were easier. 
Under the condition $100 C T^{3}N^{\frac52}<1$ which holds 
from \eqref{N-oder4}, we have
\begin{align}\label{phi2ls}
\sum_{l=2}^{\infty}\|\phi^{(2l)}\|_{L^{\infty}_{T}H^{\frac12}}
\lesssim \sigma_N^4 (\log tN)(\log N)^{-1/2}. 
\end{align}

\subsection{Lower bound of $\phi^{(2)}$ and conclusion}
Here, we establish an appropriate lower bound 
for $\| \phi^{(2)} (t) \|_{H^{\frac{1}{2}}}$.
We decomposed $\widehat{\phi^{(2)}}$ in \eqref{wtphi} into three terms, 
\eqref{wtphi2}, \eqref{phi2-1} and \eqref{phi2-2}. 
It suffices to establish a lower bound on \eqref{phi2-2} only 
since \eqref{wtphi2} is
negligible if we restrict $\xi\ge0$ in the norm and we have seen that 
\eqref{phi2-1} converges to zero faster than \eqref{phi2-2}.  
We write \eqref{phi2-2} here again and estimate
\begin{align*}
&\left|\frac{1}{\xi}\int_{\R} e^{-it\xi} \frac{e^{2it\eta}-1}{\eta} \wh{u_0}(\xi-\eta) \wh{v_0}(-\eta) d\eta\right| \\
&\gtrsim\sigma_{N}^{2}(\log N)^{-1/2}
\left|\int_{0}^{N}\frac{e^{2it\eta}-1}{\eta}d\eta\right|
\frac{\bm{1}_{[6N,7N]}(\xi)}{|\xi|} \\
&\gtrsim\sigma_{N}^{2}(\log N)^{-1/2}
\log(tN)N^{-1}\bm{1}_{[6N,7N]}(\xi).
\end{align*}
We obtain
\begin{align*}
\|\phi^{(2)}(t)\|_{H^{\frac{1}{2}}}
&\ge\left(\int_{0}^{\infty}\lr\xi|\widehat{\phi^{(2)}}(t,\xi)|^{2}d\xi\right)^{1/2} \\
&\gtrsim\sigma_{N}^{2}(\log N)^{-1/2}\log(tN)N^{-1}
\left(N(7N-6N)\right)^{1/2} \\
&\sim\sigma_{N}^{2}(\log N)^{-\frac{1}{2}}(\log tN).
\end{align*}
Therefore, the triangle inequality and Lemma \ref{lem:iteration} with \eqref{phi2ls} yield
\begin{align}\label{phi-norm}
\begin{split}
\| \phi(t) \|_{H^{\frac{1}{2}}}
&\ge \|\phi^{(2)}(t)\|_{H^{\frac{1}{2}}} - \sum_{l=2}^{\infty} \|\phi^{(2l)}(t)\|_{H^{\frac{1}{2}}} \\
&\gtrsim\sigma_{N}^{2}(\log N)^{-\frac{1}{2}}(\log tN)
-\sigma_N^4 (\log N)^{-\frac{1}{2}}(\log tN) \\
&\gtrsim\sigma_{N}^{2}(\log N)^{-\frac{1}{2}}(\log tN)
=\sigma_{N}^{2}\log\log N.
\end{split}
\end{align}
Since $\sigma_{N}$ converges to zero slower than $(\log\log N)^{-\frac12}$ 
as $N\to\infty$, 
the initial data \eqref{u0}, \eqref{v0} and \eqref{phi0} converge 
to zero, still the solution \eqref{phi-norm} is bounded from below. 
Therefore we conclude the norm inflation for \eqref{DKG'}.

\section*{Acknowledgment}
The first and second authors were supported by 
JSPS KAKENHI Grant number JP16K05191 and JP16K17624 respectively.


\begin{thebibliography}{99}

\bibitem{BT}
I. Bejenaru and T. Tao, \textit{Sharp well-posedness and ill-posedness results for a quadratic non-linear Schr\"{o}dinger equation}, J. Funct. Anal. \textbf{233} (2006), no. 1, 228--259.

\bibitem{bournaveas1}
N.~Bournaveas, \emph{A new proof of global existence for the {D}irac {K}lein-{G}ordon equations in one space dimension}, J. Funct. Anal. \textbf{173} (2000), no.~1, 203--213.

\bibitem{b-g1}
N. Bournaveas and D. Gibbeson, 
\emph{Low regularity global solutions of the {D}irac-{K}lein-{G}ordon equations in one space dimension}, Differential Integral Equations \textbf{19} (2006), no.~2, 211--222. 

\bibitem{b-g2}
N. Bournaveas and D. Gibbeson, 
\emph{Global charge class solutions of the Dirac-Klein-Gordon equations in one space dimension}, Differential Integral Equations  {\bf 19}  (2006),  no. 9, 1001--1018.

\bibitem{candy2}
T. Candy, 
\emph{Bilinear estimates and applications to global well-posedness
              for the {D}irac-{K}lein-{G}ordon equation on {$\R^{1+1}$}}, 
J. Hyperbolic Differ. Equ.  {\bf 10}  (2013),  no. 1, 1--35.

\bibitem{chadam}
J. Chadam, \emph{Global solutions of the {C}auchy problem for the (classical) coupled {M}axwell-{D}irac equations in one space dimension}, J. Functional Analysis \textbf{13} (1973), 173--184.

\bibitem{CG}
J. Chadam and R. Glassey, \textit{On certain global solutions of the Cauchy problem for the (classical) coupled Klein-Gordon-Dirac equations in one and three space dimensions}, Arch. Rational Mech. Anal. \textbf{54} (1974), 223-237.

\bibitem{fang0}
Y. -F. Fang, \emph{A direct proof of global existence for the {D}irac-{K}lein-{G}ordon equations in one space dimension}, Taiwanese J. Math. \textbf{8} (2004), 33--41. 

\bibitem{fang1}
Y. -F. Fang, 
\emph{On the {D}irac-{K}lein-{G}ordon equation in one space dimension}, Differential Integral Equations \textbf{17} (2004), 1321--1346. 

\bibitem{FangHuang}
Y. -F. Fang and H. -C. Huang, 
\emph{A critical case of the Dirac-Klein-Gordon equations in one space dimension}, Taiwanese J. Math. {\bf 12} (2008), no. 5, 1045--1059. 

\bibitem{F}
H. G. Feichtinger, \textit{Modulation spaces on locally compact Abelian groups}, Technical Report, University of Vienna, 1983, in: ``Proc. Internat. Conf. on Wavelets and Applications'' (R. Radha, M. Krishna, and S. Yhangavelu eds.), New Delhi Allied Publishers, (2003), 1--56.

\bibitem{IO}
T. Iwabuchi and T. Ogawa, \textit{Ill-posedness for nonlinear
	Schr\"{o}dinger equation with quadratic non-linearity in low
	dimensions}, Trans. Amer. Math. Soc. 367 (2015), no. 4, 2613--2630.

\bibitem{Kis}
N. Kishimoto,
\textit{A remark on norm inflation for nonlinear Schr\"{o}dinger equations},
arXiv:1806.10066.

\bibitem{KT}
N. Kishimoto and T. Tsugawa, \textit{Local well-posedness for quadratic nonlinear Schr\"{o}dinger equations and the ``good'' Bussinesq equation}, Differential Integral Equations \textbf{23} (2010), no. 5-6, 463--493.

\bibitem{machihara3}
S. Machihara, \emph{The Cauchy problem for the 1-D Dirac--Klein--Gordon equation}, NoDEA Nonlinear Differential Equations Appl. {\bf 14} (2007),  no. 5-6, 625--641. 

\bibitem{MNT10}
S. Machihara, K. Nakanishi, and K. Tsugawa, 
\textit{Well-posedness for nonlinear Dirac equations in one dimension},
Kyoto J. Math. 50 (2010), no. \textbf{2}, 403--451.

\bibitem{MacOga17}
S. Machihara and T. Ogawa, 
\textit{Global wellposedness for a one-dimensional Chern-Simons-Dirac system in $L^p$},
Comm. Partial Differential Equations {\bf 42} (2017), no. 8, 1175--1198. 

\bibitem{MacOka15}
S. Machihara and M. Okamoto,
\textit{Ill-posedness of the Cauchy problem for the Chern-Simons-Dirac system in one dimension},
J. Differential Equations \textbf{258} (2015), 1356-1394.

\bibitem{MacOka16}
S. Machihara and M. Okamoto,
\textit{Sharp well-posedness and ill-posedness for the Chern-Simons-Dirac system in one dimension},
Int. Math. Res. Not. IMRN 2016, no. \textbf{6}, 1640--1694.

\bibitem{MacOka162}
S. Machihara and M. Okamoto,
\textit{Remarks on ill-posedness for the Dirac-Klein-Gordon system},
Dyn. Partial Differ. Equ. 13 (2016), no. \textbf{3}, 179--190.

\bibitem{Oka17}
M. Okamoto
\textit{Norm inflation for the generalized Boussinesq and Kawahara equations},
Nonlinear Anal. \textbf{157} (2017), 44--61.

\bibitem{pecher}
H. Pecher, \emph{Low regularity well-posedness for the one-dimensional Dirac-Klein-Gordon system},  Electron. J. Differential Equations (2006), No. 150, 13 pp. 

\bibitem{s-t1}
S. Selberg and A. Tesfahun, \emph{Low regularity well-posedness of the Dirac-Klein-Gordon system in one space dimension},  Commun. Contemp. Math. {\bf 10}  (2008),  no. 2, 181--194. 

\bibitem{s-t2}
S. Selberg and A. Tesfahun, \emph{Remarks on regularity and uniqueness of the
              {D}irac-{K}lein-{G}ordon equations in one space
	dimension},  
NoDEA Nonlinear Differential Equations Appl. {\bf 17}  (2010),  no. 4, 
453--465. 

\bibitem{selberg1}
S. Selberg, \emph{Global well-posedness below the charge norm for the
              {D}irac-{K}lein-{G}ordon system in one space dimension},  
Int. Math. Res. Not. IMRN. {\bf 17}  (2008),  Art. ID rnm058, 25 pp. 

\bibitem{S}
R. Shiota, \textit{Well-posedness of the Cauchy problem for the one dimensional Dirac-Klein-Gordon system}, Master's Thesis, Saitama University, 2015.

\bibitem{T}
A. Tesfahun, \textit{Global well-posedness of the 1{D} {D}irac-{K}lein-{G}ordon
              system in {S}obolev spaces of negative index}, 
              J. Hyperbolic Differ. Equ. {\bf 6}. (2009), no 3, 631--661. 


\end{thebibliography}
\end{document}